\newtheorem{thm}{Theorem}[section]
\newtheorem{cor}[thm]{Corollary}
\theoremstyle{definition}
\newtheorem{defi}[equation]{Definition}
\theoremstyle{remark}
\newtheorem*{remark}{Remark}
\newtheorem*{remark1}{Remark 1}
\newtheorem*{remark2}{Remark 2}
\newtheorem*{remark3}{Remark 3}
\newcommand{\la}{\lambda}
\newcommand{\M}{\mathcal{M}}
\DeclareMathOperator{\Dom}{Dom}
\DeclareMathOperator{\supp}{supp}
\title[Consequences of a Mihlin-H\"ormander functional calculus]{On the consequences of a Mihlin-H\"ormander functional calculus: maximal and square function estimates}
\author{B\l a\.{z}ej Wr\'obel}
\address{Dipartimento di Matematica e Applicazioni, Universit\`{a} di Milano-Bicocca,
via R. Cozzi 53 I-20125, Milano, Italy,
\newline \&
Instytut Matematyczny, Uniwersytet Wroc\l awski, pl. Grunwaldzki 2/4, 50-384 Wroc\l aw, Poland}
\email{blazej.wrobel@math.uni.wroc.pl}
\subjclass[2010]{47A60, 42B25, 42B15}
\keywords{maximal function, square function, spectral multiplier}
\begin{document}

 \begin{abstract}
We prove that the existence of a Mihlin-H\"ormander functional calculus for an operator $L$ implies the boundedness on $L^p$ of both the maximal operators and the continuous square functions build on spectral multipliers of $L.$ The considered multiplier functions are finitely smooth and satisfy an integral condition at infinity. In particular multipliers of compact support are admitted.
\end{abstract}

  \maketitle
 \numberwithin{equation}{section}
 \section{Introduction}

Consider the Laplacian $-\Delta$ on $\mathbb{R}^d$ and let $m\colon [0,\infty)\to\mathbb{C}$ be a bounded compactly supported function having $[d/2]+1$ continuous derivatives. The Mihlin-H\"ormander multiplier theorem \cite{Horm1}, \cite{Mikhlin}, implies the boundedness of the Fourier multiplier operator $m(-\Delta)$  on all $L^p,$ $1<p<\infty,$ spaces. Moreover it is well known that the maximal function $M_m(f):=\sup_{t>0}|m(-t\Delta)(f)|$ has the same boundedness properties, as well as the square function $S_m(f)^2=\int_0^{\infty}|m(-t\Delta)f|^2\,\frac{dt}{t}$ (under the additional assumption $m(0)=0$).

The boundedness of $M_m$ on $L^p$ spaces, $1<p<\infty,$ follows from majorizing this operator by the Hardy-Littlewood maximal function. As for the square function $S_m,$ the boundedness on all $L^p$ spaces, $1<p<\infty,$ can be proved by applying the vector-valued Calde\'on-Zygmund theory. We remark that all the proofs referred to in this paragraph are using dilation properties of $\mathbb{R}^d$ and the Fourier transform in a decisive manner.

%Ref Grafakos Classical Fourier Analysis Excercise 5.5.1

Replacing the Laplacian $-\Delta$ by some other non-negative self-adjoint operator $L$ on $L^2$ leads to the spectral multipliers $m(L).$ If $m\colon [0,\infty)\to \mathbb{C}$ is Borel measurable and bounded then $m(L)$ is initially well defined (and bounded) on $L^2$ by the spectral theorem. Throughout the paper we assume that $L$ has a Mihlin-H\"ormander functional calculus on $L^p,$ i.e.\ the Mihlin-H\"ormander multiplier theorem holds for $L$, regarded as an operator on $L^p$ spaces, $1<p<\infty$. More precisely, we impose that every function satisfying the Mihlin-H\"ormander condition $\sup_{\la>0}|\la ^\beta \frac{d^\beta}{d\la^\beta}m(\la)|\leq C_\beta,$ $\beta=0,\ldots,\alpha,$ of order $\alpha$ (here $\alpha=\alpha(L)$ is a fixed parameter) gives rise to a bounded operator $m(L)$ on all $L^p$ spaces, $1<p<\infty.$ There is a vast literature giving (or implying) the existence of a Mihlin-H\"ormander functional calculus for more general operators, see e.g.\ \cite{Alex_1}, \cite{s1}, \cite{s2}, \cite{s3}, \cite{s4}, \cite{s5}, \cite{s6}, \cite{s7}, \cite{JoSaTh1}, \cite{MarSik1}, \cite{MarMul1}, \cite{vectvalMM}, \cite{Meda1}, \cite{Sik}, and references therein.

On the other hand the topic of $L^p$ estimates for the general maximal operators  $M_m(f):=\sup_{t>0}|m(tL)(f)|$ and square functions $S_m(f)^2=\int_0^{\infty}|m(tL)f|^2\,\frac{dt}{t}$  has attracted considerably less attention. The main aim of this paper is to prove that, to a large degree, these estimates can be deduced from the Mihlin-H\"ormander functional calculus itself.

%Info Alex_1 implies MH, because we can put the imaginary powers in his condition and use Meda

Let $\varphi$ be a compactly supported function with continuous $\alpha+2$ derivatives and such that $0\not \in \supp \varphi.$
One of the well known consequences of a Mihlin-H\"ormander functional calculus on $L^p,$ $1<p<\infty,$ is the $L^p$ boundedness of the discrete square function
$$S^{disc}_{\varphi}(f)=\big(\sum_{k\in \mathbb{Z}} |\varphi(2^{-k}L)f|^2\big)^{1/2}.$$
The aforementioned boundedness follows easily from Khintchine's inequality. As a consequence, for such a $\varphi,$ the discrete maximal function $M^{disc}_{\varphi}(f)=\sup_{k\in\mathbb{Z}} |\varphi(2^{-k}L)f|$ is also bounded on all $L^p$ spaces, $1<p<\infty.$ For the Fourier case we know that the assumption $0\not\in \supp\varphi$ is superfluous. We prove that the same is true for general maximal operators $M_{\varphi}(f)=\sup_{t>0}|\varphi(tL)f|$, where it might well be that $\varphi(0)\neq 0.$ This is done in Theorem \ref{thm:MHimpliesMaximal} and Corollary \ref{thm:CkimpliesMaximal}.

What concerns the square function $S_{\psi}(f)^2=\int_0^{\infty}|\psi(tL)f|^2\,\frac{dt}{t}$, the paper \cite{CowDouMcYa} by Cowling, Doust, McIntosh, and Yagi treats exhaustively the case when $\psi$ is holomorphic. Our article relaxes this assumption to some finite order of smoothness, see Theorem \ref{thm:MHimpliesSqu} and Corollary \ref{thm:CKimpliesSqu}. For instance we may take $\psi$ to be a compactly supported function having continuous $\alpha+2$ derivatives and satisfying $\psi(0)=0.$

The methods we use to treat maximal functions are based on Mellin transform techniques and have their roots in Cowling's \cite{Hanonsemi}. These techniques were employed by Alexopoulos and Lohu\'e in \cite{AlexLo} and by Gunawan and Sikora in \cite{SikGun1}. The paper \cite{AlexLo} focuses on the specific maximal functions associated with Bochner-Riesz means on general Lie groups of polynomial volume growth or on Riemannian manifolds of non-negative curvature. The report \cite{SikGun1} studies maximal functions associated with more general multipliers for elliptic operators on $\mathbb{R}^d.$ Our contribution is the observation that similar methods can be used in a far bigger generality. The techniques we employ to examine square functions are an adaptation of those from \cite{CowDouMcYa} and \cite{Me_gfun}.

%Let us underline that one of the main goals of the paper is to treat $M_{\varphi}$ and $S_{\psi}$ for compactly supported functions $\varphi$ and $\psi.$ This is achieved in Corollaries \ref{thm:CkimpliesMaximal} and \ref{thm:CKimpliesSqu}.

It seems that the boundedness of maximal and square functions ($M_{\varphi}$ and $S_{\psi}$) for general operators $L$ having a Mihlin-H\"ormander functional calculus has not yet been fully considered. In the present paper we prove some results in this direction.
 Theorems \ref{thm:MHimpliesMaximal} and Corollary \ref{thm:CkimpliesMaximal} as well as \ref{thm:CKimpliesSqu} and Corollary \ref{thm:CKimpliesSqu} apply to all the operators considered in \cite{Alex_1}, \cite{s1}, \cite{s2}, \cite{s3}, \cite{s4}, \cite{s5}, \cite{s6}, \cite{s7}, \cite{JoSaTh1}, \cite{MarSik1}, \cite{MarMul1}, \cite{Meda1}, \cite{Sik}. In particular $L$ can be: a Laplacian on a general Lie group of polynomial volume growth (the Mihlin-H\"ormander functional calculus follows from \cite{s1}), a Laplacian on a discrete groups with polynomial volume growth (the Mihlin-H\"ormander functional calculus can be deduced from \cite{Alex_1}), or a non-negative operator having Davies-Gaffney estimates for its heat semigroup (the Mihlin-H\"ormander functional calculus is a consequence of \cite{Sik}).

The arguments used in Theorems \ref{thm:MHimpliesMaximal} and \ref{thm:MHimpliesSqu} could be easily refined to yield sharper results. We decided to stick with integral conditions $N(\varphi)<\infty$ and $\tilde{N}(\varphi)<\infty$ (see \eqref{eq:Norm}) as they can often be directly verified. Finally, let us underline that an important motivation for our research was to obtain maximal and square functions estimates for operators based on compactly supported $C^{\alpha+2}$ multipliers. This has been accomplished in Corollaries \ref{thm:CkimpliesMaximal} and \ref{thm:CKimpliesSqu}.

\section{Preliminaries}
The following terminology is used throughout the paper. The Melin transform of a function $m$ is given by
$$\M(m)(u)=\int_{0}^{\infty}s^{-iu}m(s)\frac{ds}{s},\qquad u\in \mathbb{R}.$$
The inversion formula for the Melin transform reads, for $m\in L^1((0,\infty),\frac{ds}{s})$ and $\M(m)\in L^1(\mathbb{R},du),$  as
\begin{equation}\label{eq:MelInv}m(s)=\frac1{2\pi}\int_{\mathbb{R}}\M(m)(u)s^{iu}\,du,\qquad s >0.\end{equation}
The Plancherel formula for the Melin transform is
\begin{equation}
    \label{eq:MelPlanch}
    \int_{0}^{\infty}|m(s)|^2\,\frac{ds}{s}=\frac{1}{2\pi}\int_{\mathbb{R}}|\M(m)(u)|^2\,du.
    \end{equation}

\begin{defi}
\label{def:MihHormcon}
We say that $m\colon [0,\infty)\to \mathbb{C}$ satisfies the Mihlin-H\"ormander condition of order $\alpha\in \mathbb{N}$ if $m$ is a bounded function having partial derivatives up to order $\alpha,$ and for all non-negative integers $j\leq\alpha$
\begin{equation*}\tag{MH} \label{eq:MihHorm}
\|m\|_{(\beta)}:=\sup_{\la>0}|\la^{j}\frac{d^{j}}{d\la^{j}}m(\la)|<\infty.
\end{equation*}
\end{defi}

If $m$ satisfies the Mihlin-H\"ormander condition of order $\alpha,$ then we set
\begin{equation*}%\label{chap:Intro,sec:Notation,eq:Marnorm}
\|m\|_{MH(\alpha)}:=\sup_{\beta\leq \alpha}\|m\|_{(\beta)}.
\end{equation*}
%Note that if $m_a(\la)=m(\la^a),$ with $a>0,$ then
%\begin{equation}
%\label{eq:chanvar}
%\|m\|_{MH(\alpha)}=\|m_a\|_{MH(\alpha)}.
%\end{equation}

Let $L$ be a non-negative self-adjoint operator on $L^2(X,\nu),$ for some $\sigma$-finite measure space $(X,\nu),$ with $\nu$ being a Borel measure. Then, for $m\colon [0,\infty)\to \mathbb{C},$ the spectral theorem allows us to define the multiplier operator
$m(L)=\int_{[0,\infty)}m(\la)dE(\la)$
on the domain
$$\Dom(L)=\bigg\{f\in L^2(X,\nu)\colon \int_{[0,\infty)}|m(\la)|^2\,dE_{f,f}(\la)<\infty\bigg\}.$$
Here $E$ is the spectral measure of $L,$ while $E_{f,f}$ is the complex measure given by $E_{f,f}(\cdot)=\langle E(\cdot)f,f\rangle_{L^2(X,\nu)}.$ Mainly for notational convenience throughout the paper we also assume that $L$ has trivial kernel (or in other words that the spectral projection satisfies $E(\{0\})=0$). In this case $m(L)$ can be rewritten as $m(L)=\int_{(0,\infty)}m(\la)dE(\la).$

We say that $L$ has the Mihlin-H\"ormander (MH) functional calculus of order $\alpha>0$ if the following holds: every bounded multiplier function $m$ that satisfies the Mihlin-H\"ormander condition \eqref{eq:MihHorm} of order $\alpha,$ gives rise to an operator $m(L)$ (defined initially on $L^2(X,\nu)$ by the spectral theorem), which satisfies $$\|m(L)f\|_{L^p(X,\nu)}\leq C_{p}\|m\|_{MH(\alpha)}\|f\|_{L^p(X,\nu)},\qquad f\in L^p(X,\nu)\cap L^2(X,\nu),$$ for $1<p<\infty.$ Clearly, in this case $m(L)$ extends to a bounded operator on all $L^p(X,\nu)$ spaces, $1<p<\infty.$ Throughout the paper we assume that $L$ is a non-negative self-adjoint operator that has the Mihlin-H\"ormander funcional calculus of order $\alpha$ on every $L^p(X,\nu),$ $1<p<\infty.$
%Observe that by \eqref{eq:chanvar} all the positive powers $L^{a},$ $a>0,$ also have a Mihlin-H\"ormander (MH) functional calculus of order $\alpha.$

Among the well-known consequences of a Mihlin-H\"ormander functional calculus the one especially important to us is the bound for the imaginary powers
\begin{equation}
\label{eq:ImaPow}
\|L^{iu}\|_{L^p(X,\nu)\to L^p(X,\nu)}\lesssim (1+|u|)^{\alpha},\qquad u\in\mathbb{R}.
\end{equation}

We say that $L$ generates a symmetric contraction semigroup whenever
 \begin{equation}
        \label{eq:contra}
        \|\exp(-tL)f\|_{L^p(X,\nu)}\leq \|f\|_{L^p(X,\nu)},\qquad f\in L^2(X,\nu)\cap L^p(X,\nu).
        \end{equation}
Note that by Meda's \cite[Theorem 4]{Meda1}, for operators generating symmetric contraction semigroups, condition \eqref{eq:ImaPow} is, up to the order of differentiability, equivalent with $L$ having a Mihlin-H\"ormander functional calculus on all $L^p$ spaces, $1<p<\infty$.

We will often consider Banach spaces $L^p(X,\nu)$ for $1<p<\infty$. For brevity we write $L^p$ and $\|\cdot\|_p$ instead of $L^p(X,\nu)$ and $\|\cdot\|_{L^p(X,\nu)},$ $1\leq p\leq\infty.$ If $T$ is a sublinear operator then the symbol $\|T\|_{p\to p}$ denotes the norm of $T$ acting on $L^p.$ Slightly abusing the terminology we say that a sublinear operator is bounded on $L^p$ if it has a unique bounded extension to $L^p.$ In particular it is enough to prove the boundedness on $L^2\cap L^p.$

The symbol $C^{\beta}(Y)$ represents the space of continuous functions on $Y$ with continuous $\beta$ derivatives. For $h\in C^{\beta}(Y)$ we define $$\|h\|_{C^{\beta}(Y)}=\sup_{\gamma\leq \beta}\sup_{y\in Y}\left|\frac{d^{\gamma}}{dy^{\gamma}}h(y)\right|;$$
note that it may happen that $\|h\|_{C^{\beta}(Y)}=\infty.$ In this paper $Y$ equals $\mathbb{R}$, $[0,\infty)$ or $[0,1];$ in the last two cases only one-sided derivatives on the boundary are considered.

For a non-negative integer $\beta$ and a Borel measurable function $\eta$ we denote
\begin{equation*}
%\label{eq:MHimpliesBothassum}
\begin{split}
C(\eta,\beta)&:=\int_{1}^{\infty}\bigg|\frac{d^{\beta}}{ds^{\beta}}\eta(s)\bigg|s^{\beta-1}\,ds<\infty,\\
D(\eta,\beta)&:=\int_{1}^{\infty}(\log s)^2\bigg|\frac{d^{\beta}}{ds^{\beta}}\eta(s)\bigg|s^{\beta-1}\,ds<\infty
\end{split}
\end{equation*}
whenever the integrals make sense. In Theorems \ref{thm:MHimpliesMaximal} and \ref{thm:MHimpliesSqu} the following quantities are used
\begin{equation}
 \label{eq:Norm}
\begin{split}
N(\eta)&:=\|\eta\|_{C^{\alpha+2}([0,1])}+\sup_{\beta=0,1,\alpha+2}C(\eta,\beta),\\
 \tilde{N}(\eta)&:=N(\eta)+D(\eta,\alpha+2).
\end{split}
\end{equation}
Note that if $N(\eta)<\infty,$ then $\eta$ is bounded. This follows from the inequality $|\eta(s)|\leq C(\eta,1)+|\eta(1)|,$ valid for $s\geq 1.$

For non-negative numbers $A$ and $B$ by $A\lesssim_\omega B$ we mean that $A\leq C_{\omega} B,$ where $C_{\omega}$ is a constant that may depend only on $\omega$. In our case $C_{\omega}$ is always independent of the function $f\in L^2\cap L^p,$ though it may depend on both $p\in(1,\infty)$ and the multiplier $\varphi$ or $\psi$.

\section{Maximal operators}
\label{sec:MaxOp-Lp}
In this section we additionally impose that $L$ generates a symmetric contraction semigroup, i.e.\ \eqref{eq:contra} holds. This assumption holds for all the operators studied in \cite{Alex_1}, \cite{s1}, \cite{s2}, \cite{s3}, \cite{s4}, \cite{s5}, \cite{s6}, \cite{s7}, \cite{JoSaTh1}, \cite{MarSik1}, \cite{MarMul1}, \cite{vectvalMM}, \cite{Meda1}, \cite{Sik}.

The maximal functions investigated here are of the form
\begin{equation}
\label{eq:Maxdef}
M_{\varphi}(f)=\sup_{t>0}|\varphi(t L)f|.
\end{equation}
In particular, for $\varphi(\la)=\exp(-\la)$ we have the maximal operator associated with the heat semigroup $M_{\exp(-\cdot)}.$
The following is the main result of Section \ref{sec:MaxOp-Lp}. Note that formally the supremum in \eqref{eq:Maxdef} should be restricted to a countable set, however, from the proof of Theorem \ref{thm:MHimpliesMaximal} it will follow that the supremum may be taken over all of $(0,\infty).$ We recall that $N(\varphi)$ is defined by \eqref{eq:Norm}.
\begin{thm}
\label{thm:MHimpliesMaximal}
Let $\varphi\colon [0,\infty)\to \mathbb{C}$ belong to $C^{\alpha+2}([0,\infty))$ and assume that $N(\varphi)<\infty$. Then the maximal operator defined by \eqref{eq:Maxdef} is bounded on all $L^p$ spaces, $1<p<\infty,$ and
\begin{equation}
\label{eq:Maxbound}
\|M_{\varphi}(f)\|_{p}\lesssim_p N(\varphi)\|f\|_p,\qquad f\in L^2\cap L^p.
\end{equation}
\end{thm}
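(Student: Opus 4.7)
The plan is to reduce the maximal estimate to the imaginary power bound \eqref{eq:ImaPow} via Mellin transform techniques. Since that strategy requires the multiplier to vanish at the origin, I would first split
\[
\varphi(s)=\varphi(0)e^{-s}+\psi(s),\qquad \psi(s):=\varphi(s)-\varphi(0)e^{-s},
\]
so that $\psi(0)=0$. The piece $|\varphi(0)|\sup_{t>0}|e^{-tL}f|$ is $L^p$-bounded by the maximal theorem for symmetric contraction semigroups (\cite{Hanonsemi}), applicable thanks to \eqref{eq:contra}. The remainder $\psi$ lies in $C^{\alpha+2}([0,\infty))$, and since $e^{-s}$ and its derivatives decay rapidly at infinity, $\psi$ inherits $N(\psi)\lesssim N(\varphi)$.

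Because $\psi(0)=0$ and $\psi\in C^1$, we have $|\psi(s)|\lesssim s$ near the origin, hence $\psi\in L^1((0,1),ds/s)$; combined with $C(\psi,0)<\infty$ this gives $\psi\in L^1((0,\infty),ds/s)$ and $\M(\psi)$ is well-defined. The decisive estimate is
\[
|\M(\psi)(u)|\lesssim N(\varphi)\,(1+|u|)^{-(\alpha+2)},
\]
obtained by iterating the identity $\M((s\tfrac{d}{ds})\eta)(u)=iu\,\M(\eta)(u)$ a total of $\alpha+2$ times and bounding the resulting linear combination of $s^j\psi^{(j)}(s)$, $0\leq j\leq\alpha+2$, in $L^1((0,\infty),ds/s)$. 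Since $\M(\psi)\in L^1(\mathbb{R})$, the inversion formula \eqref{eq:MelInv} combined with the spectral theorem yields the operator-valued identity
\[
\psi(tL)f=\frac{1}{2\pi}\int_{\mathbb{R}}\M(\psi)(u)\,t^{iu}L^{iu}f\,du,\qquad f\in L^2.
\]

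The entire $t$-dependence now sits in the unimodular factor $t^{iu}$, so it disappears under absolute values: pointwise,
\[
\sup_{t>0}|\psi(tL)f(x)|\leq\frac{1}{2\pi}\int_{\mathbb{R}}|\M(\psi)(u)|\,|L^{iu}f(x)|\,du.
\]
Minkowski's integral inequality in $L^p$ combined with \eqref{eq:ImaPow} then gives
\[
\|M_{\psi}(f)\|_p\lesssim\int_{\mathbb{R}}|\M(\psi)(u)|(1+|u|)^\alpha\,du\cdot\|f\|_p\lesssim N(\varphi)\int_{\mathbb{R}}(1+|u|)^{-2}\,du\cdot\|f\|_p,
\]
and adding the heat-semigroup maximal bound for the $\varphi(0)e^{-s}$ piece yields \eqref{eq:Maxbound}. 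The measurability issue raised after \eqref{eq:Maxdef} is automatic, since the right-hand side above dominates $|\psi(tL)f(x)|$ for \emph{all} $t>0$ simultaneously, so the supremum may equivalently be taken over any countable dense subset of $(0,\infty)$.

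The main technical obstacle I anticipate is the Mellin decay estimate above: $(s\tfrac{d}{ds})^{\alpha+2}\psi$ expands into terms $s^j\psi^{(j)}(s)$ for all $0\leq j\leq\alpha+2$, and while $N(\varphi)$ directly controls the weighted $L^1$-integrals on $[1,\infty)$ only for $j\in\{0,1,\alpha+2\}$, the intermediate orders require a weighted Landau-Kolmogorov-type interpolation, with constants traced carefully to recover the clean dependence on $N(\varphi)$. The $C^{\alpha+2}([0,1])$ portion of $N(\varphi)$ handles the interval $[0,1]$ uniformly. This is presumably why the hypothesis $N(\varphi)<\infty$ only records three specific orders of derivatives; as the introduction itself notes, refined formulations are possible at the cost of a heavier statement.
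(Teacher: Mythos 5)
Your overall strategy is exactly the paper's: subtract $\varphi(0)e^{-s}$, handle that piece by Cowling's maximal theorem for symmetric contraction semigroups, and control the remainder by Mellin inversion plus the imaginary-power bound \eqref{eq:ImaPow}, reducing everything to the decay $|\M(\psi)(u)|\lesssim N(\varphi)(1+|u|)^{-\alpha-2}$.

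The one substantive difference is how you propose to prove that decay, and it is where your writeup has a gap. Iterating $\M\big((s\tfrac{d}{ds})\eta\big)(u)=iu\,\M(\eta)(u)$ expands $(s\tfrac{d}{ds})^{\alpha+2}\psi$ into \emph{all} the terms $s^{j}\psi^{(j)}$, $1\leq j\leq\alpha+2$, and, as you yourself note, $N(\varphi)$ only records the weighted $L^1$ control on $[1,\infty)$ for $j\in\{0,1,\alpha+2\}$; you defer the intermediate orders to an unproved ``weighted Landau--Kolmogorov interpolation.'' That interpolation can in fact be carried out (apply the local Landau inequality on dyadic blocks $[2^m,2^{m+1}]$, where the weight $s^{j-1}$ is essentially constant, to get $C(\psi,j)\lesssim C(\psi,0)+C(\psi,\alpha+2)$), but as written this decisive step is missing. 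The paper sidesteps the issue entirely: it integrates by parts $\alpha+2$ times directly in $\int_0^\infty s^{-iu-1}[\varphi(s)-\varphi(0)e^{-s}]\,ds$, which produces \emph{only} the top-order derivative against the weight $s^{\alpha+1}$, i.e.\ exactly the quantities $\|\cdot\|_{C^{\alpha+2}([0,1])}$ and $C(\cdot,\alpha+2)$ appearing in $N$. The paper also first proves the estimate for compactly supported $\varphi$ (so the boundary terms in the integration by parts are unproblematic) and then passes to general $\varphi$ by a cutoff/dominated-convergence argument; you should add a similar justification, since for a general $\psi$ with only $C(\psi,j)<\infty$ the vanishing of the boundary terms at infinity is not automatic. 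Finally, your measurability remark needs the continuity of $t\mapsto\int\M(\psi)(u)t^{iu}L^{iu}f(x)\,du$ (dominated convergence), not just the uniform domination; this is a minor point the paper addresses explicitly.
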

\begin{remark1}
 A natural question is whether it is enough to assume that $\varphi$ satisfies the Mihlin-H\"ormander condition \eqref{eq:MihHorm} instead of imposing $N(\varphi)<\infty$. In \cite{MaxFourMihHorm} the authors proved that this is not the case even for radial Fourier multipliers (which correspond to $L$ being the Laplacian). Therefore some other assumptions are indeed necessary.
\end{remark1}

\begin{remark2}
 The required smoothness threshold can be improved if we use a Sobolev (continuous) variant of the Mihlin-H\"ormander norm \eqref{eq:MihHorm} and interpolate \eqref{eq:ImaPow} with the obvious $L^2(X,\nu)$  bound $\|L^{iu}\|_{L^2(X,\nu)\to L^2(X,\nu)}\leq 1.$ However, in general the obtained result is still far from being optimal. Thus, for the clarity of the presentation we decided to phrase Theorem \ref{thm:MHimpliesMaximal} in terms of the integer Mihlin-H\"ormander norm.
\end{remark2}
\begin{remark3}
 The proof presented here can be adjusted to operators that have a Mikhlin-H\"ormander functional calculus on other Banach spaces $B$ than $L^p.$ For instance the results of \cite{DzPr1} imply the existence of such a calculus, on the Hardy space $B=H^1_L$ (introduced in \cite{HofLMiMiYa1}), for operators $L$ with Davies-Gafney estimates on their heat semigroups. However, a proper adjustment would require the strong continuity on $B$ of the imaginary powers $\{L^{iu}\}_{u\in \mathbb{R}}$, which is not clear in the case of the general Hardy space $H^1_L.$  This will be one of the problems addressed in a forthcoming paper by Jacek Dziuba\'nski and the author.
\end{remark3}

Before proceeding to the proof of Theorem \ref{thm:MHimpliesMaximal} let us note the following useful corollary.
\begin{cor}
\label{thm:CkimpliesMaximal}
If $\varphi$ is supported on a compact set $K$, then, for each $1<p<\infty,$ we have
\begin{equation}
\label{eq:MaxboundCom}
\|M_{\varphi}(f)\|_{p}\lesssim_{K,p} \|\varphi\|_{C^{\alpha+2}([0,\infty))}\|f\|_p,\qquad f\in L^2\cap L^p.
\end{equation}
\end{cor}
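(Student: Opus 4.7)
The plan is to deduce the corollary directly from Theorem \ref{thm:MHimpliesMaximal} by showing that, under the compact support hypothesis, $N(\varphi)$ is dominated (up to a constant depending on $K$) by $\|\varphi\|_{C^{\alpha+2}([0,\infty))}$. Once this is established, \eqref{eq:MaxboundCom} follows by invoking \eqref{eq:Maxbound} with $f\in L^2\cap L^p$.

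First I would fix $R>0$ with $K\subset[0,R]$. Since $\varphi\in C^{\alpha+2}([0,\infty))$ vanishes outside the compact set $K$, its derivatives $\varphi^{(\beta)}$ up to order $\alpha+2$ are also supported in $K\subset[0,R]$. The term $\|\varphi\|_{C^{\alpha+2}([0,1])}$ is trivially bounded by $\|\varphi\|_{C^{\alpha+2}([0,\infty))}$. For the integral pieces in \eqref{eq:Norm}, I split depending on whether $R\leq 1$ (in which case each $C(\varphi,\beta)$ is zero for the integration range $[1,\infty)$) or $R>1$. In the latter case,
\begin{equation*}
C(\varphi,\beta)=\int_{1}^{R}\bigl|\varphi^{(\beta)}(s)\bigr|\,s^{\beta-1}\,ds\leq \|\varphi\|_{C^{\alpha+2}([0,\infty))}\int_{1}^{R}s^{\beta-1}\,ds\leq C_{K,\beta}\,\|\varphi\|_{C^{\alpha+2}([0,\infty))},
\end{equation*}
and this estimate is used for $\beta=0,1,\alpha+2$ (the $\beta=0$ case gives $\log R$). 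Consequently $N(\varphi)\leq C_{K,\alpha}\,\|\varphi\|_{C^{\alpha+2}([0,\infty))}$.

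Finally, inserting this bound into \eqref{eq:Maxbound} yields
\begin{equation*}
\|M_{\varphi}(f)\|_{p}\lesssim_{p} N(\varphi)\,\|f\|_{p}\lesssim_{K,p}\|\varphi\|_{C^{\alpha+2}([0,\infty))}\,\|f\|_{p},
\end{equation*}
which is exactly \eqref{eq:MaxboundCom}. There is no real obstacle here: the corollary is essentially a qualitative simplification of Theorem \ref{thm:MHimpliesMaximal}, with the compactness of $\supp\varphi$ trivially truncating the integrals defining $C(\varphi,\beta)$ to a bounded range. The only mild subtlety is tracking how the constant depends on $K$ (specifically, through $R$ and $\log R$), but it is explicit in the calculation above.
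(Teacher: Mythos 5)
Your proposal is correct and follows exactly the paper's route: the paper's proof is the single line ``Use Theorem \ref{thm:MHimpliesMaximal} and the estimate $N(\varphi)\lesssim_K \|\varphi\|_{C^{\alpha+2}([0,\infty))}$,'' and your argument simply supplies the (routine) verification of that estimate by truncating the integrals $C(\varphi,\beta)$ to $[1,R]$.
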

\begin{proof}
 Use Theorem \ref{thm:MHimpliesMaximal} and the estimate $N(\varphi)\lesssim_K \|\varphi\|_{C^{\alpha+2}([0,\infty))}.$
\end{proof}

\begin{remark}
\label{p:Mphidelta}
Exemplary functions satisfying the assumptions of the corollary are $\varphi^{\delta}=(1-\la)^{\delta}\chi_{\{\la<1\}},$ with $\delta\geq \alpha+2.$ In this case $M_{\varphi^{\delta}}$ is the maximal function connected with the Bochner-Riesz means.
\end{remark}
Now we pass to the proof of the main result of this section.
\begin{proof}[Proof of Theorem \ref{thm:MHimpliesMaximal}]
%Here! say something about Bochner integrals - good to mention for the next section
We will use Melin transform techniques here, to estimate the maximal operator by a certain integral of imaginary powers $L^{iu}$. This idea is due to Cowling \cite[Section 3]{Hanonsemi}. Remark that essentially all that is needed to write the proof below rigorously is the strong $L^p$ continuity (for $1<p<\infty$) of the group of imaginary powers.

From \cite[Theorem 7]{Hanonsemi} it follows that $M_{\exp(-\cdot)}$ is bounded on all $L^p.$ Thus, it is enough to show that $M_{\varphi(\cdot)-\varphi(0)\exp(-\cdot)}$ is also bounded on $L^p.$ For each fixed $\la, t>0,$ we have
\begin{align*}&\M(\varphi(\cdot t\la)-\varphi(0)\exp(-\cdot t\la))(u)=t^{iu}\la^{iu}\int_{0}^{\infty}(st\la)^{-iu}[\varphi(s t\la)-\varphi(0)\exp(-s t\la)]\frac{ds}{s}\\
&=t^{iu}\la^{iu}\int_{0}^{\infty}s^{-iu}[\varphi(s)-\varphi(0)\exp(-s )]\frac{ds}{s}:=A_{\varphi}(u)t^{iu}\la^{iu}, \qquad u\in \mathbb{R},
\end{align*}
and, consequently, by spectral theory,
$$\M(\varphi(\cdot tL)-\varphi(0)\exp(-\cdot tL))(u)=A_{\varphi}(u)t^{iu}L^{iu}.$$
Hence, using \eqref{eq:MelInv} we formally write
\begin{equation}
 \label{eq:Melinform1}
[\varphi(tL)-\varphi(0)\exp(-tL)](f)=\int_{\mathbb{R}}A_{\varphi}(u)t^{iu}L^{iu}f\,du,
\end{equation}
which together with Minkowski's integral inequality imply
\begin{equation*}
\|M_{\varphi(\cdot)-\varphi(0)\exp(-\cdot)}(f)\|_p=\|\sup_{t>0}\left|\int_{\mathbb{R}}A_{\varphi}(u)t^{iu}L^{iu}f\,du\right|\|_p\leq \int_{\mathbb{R}}|A_{\varphi}(u)|\|L^{iu}f\|_p\,du.
\end{equation*}
We remark that the validity of \eqref{eq:Melinform1} was exactly the reason for subtracting the term $\varphi(0)\exp(-s).$ Now, in view of \eqref{eq:ImaPow} it is enough to prove the bound
\begin{equation}
\label{eq:Cuest}
|A_{\varphi}(u)|\lesssim N(\varphi) (1+|u|)^{-\alpha-2},\qquad u\in \mathbb{R}.
\end{equation}

Assume first that $\varphi$ is compactly supported. Then, clearly, $|A_{\varphi}(u)|\lesssim N(\varphi).$ Moreover, repeated integration by parts produces
$$A_{\varphi}(u)=\frac1{-iu\cdots(-iu+\alpha+2)}\int_{0}^{\infty}s^{-iu+\alpha+1}\frac{d^{\alpha+2}}{ds^{\alpha+2}}[\varphi(s)-\varphi(0)\exp(-s )]\,ds.$$ Now a short calculation leads to \eqref{eq:Cuest}.

To demonstrate \eqref{eq:Cuest} for a general multiplier $\varphi$ we use an approximation argument. Let $\rho\colon \mathbb{R}\to \mathbb{R}$ be smooth and such that $\rho(y)=1,$ for $y\leq 0,$ and $\rho(y)=0,$ for $y>1.$ Then, for each $R>0$ the function $\rho_R(\la)=\rho(\la-R)$ is compactly supported and smooth when restricted to $\la \in [0,\infty).$ Thus, for $\varphi_R=\varphi \times \rho_R$ we have the bound
\begin{equation}
\label{eq:CuestR}
|A_{\varphi_R}(u)|\lesssim N(\varphi_R) (1+|u|)^{-\alpha-2},\qquad u\in \mathbb{R}.
\end{equation}
Now, using the assumptions on $\varphi$ and Lebesgue's dominated convergence theorem, it is straightforward to see that $\lim_{R\to \infty}A_{\varphi_R}(u)=A_{\varphi}(u)$ and  $\lim_{R\to \infty} N(\varphi_R)=N(\varphi).$ Hence, taking $R\to \infty$ in \eqref{eq:CuestR} gives \eqref{eq:Cuest} for general $\varphi.$

In summary, the proof is completed, provided we justify that the formal expression \eqref{eq:Melinform1} converges as an $L^p$-valued integral. This follows from the well-known $L^p$ continuity of $u\mapsto L^{iu}f$ and the finiteness of $\int_{\mathbb{R}}|A_{\varphi}(u)|\|L^{iu}f\|_p\,du.$ Observe also that the above argument together with the Lebesgue's dominated convergence theorem shows that the mapping $t\mapsto \int_{\mathbb{R}}A_{\varphi}(u)t^{iu}L^{iu}f(x)\,du$ is continuous, for a.e.\ $x\in X.$ Hence, in view of \eqref{eq:Melinform1} the supremum in the definition of the maximal function may be taken over all $(0,\infty).$
\end{proof}

%Bochner Riesz means...

%Here! We are really interested in compactly supported Ck multipliers!
\section{Continuous square functions}
\label{sec:SqFun-Lp}
Now we pass to square functions. These are given by
\begin{equation}
\label{eq:Sqdef}
S_{\psi}(f)=\bigg(\int_0^{\infty}|\psi(tL)f|^2\,\frac{dt}t\bigg)^{1/2}.
\end{equation}
The spectral theorem implies that $S_{\psi}$ is bounded on $L^2$ (in which case it is an isometry) if and only if \begin{equation}\label{eq:SqL2}\int_0^{\infty}|\psi(t)|^2\,\frac{dt}t<\infty.\end{equation}
The next theorem establishes the boundedness of $S_{\psi}$ on other $L^p$ spaces. Recall that $\tilde{N}(\psi)$ is defined by \eqref{eq:Norm}.
\begin{thm}
\label{thm:MHimpliesSqu}
Let $\psi\in C^{\alpha+2}[0,\infty)$ be such that $\psi(0)=0$ and $\tilde{N}(\psi)<\infty.$
Then, for each $1<p<\infty,$  the square function \eqref{eq:Sqdef} satisfies
\begin{equation}
\label{eq:MHimpliesSqu}
\|f\|_p\lesssim_{p,\psi}\|S_{\psi}(f)\|_p\lesssim_{p,\psi}\|f\|_p,\qquad f\in L^2 \cap L^p.
\end{equation}
\end{thm}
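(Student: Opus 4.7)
The strategy mirrors the Mellin-transform approach of Theorem \ref{thm:MHimpliesMaximal}, with Mellin-Plancherel in the $t$-variable in place of the direct Mellin inversion. Since $\psi(0)=0$ and $\psi\in C^{\alpha+2}([0,\infty))$ with $\tilde{N}(\psi)<\infty$, we have $|\psi(s)|\lesssim s$ near $0$ and $\int_1^\infty |\psi(s)|\,ds/s<\infty$, so $\psi\in L^1((0,\infty),ds/s)$ and $\mathcal{M}(\psi)$ is well defined. Integration by parts $\alpha+2$ times as in the derivation of \eqref{eq:Cuest} yields the decay $|\mathcal{M}(\psi)(u)|\lesssim N(\psi)(1+|u|)^{-\alpha-2}$ (no subtraction of $\psi(0)\exp(-\cdot)$ is needed here, because $\psi(0)=0$). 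Mellin inversion then gives
$$\psi(tL)f(x)=\frac{1}{2\pi}\int_\mathbb{R}\mathcal{M}(\psi)(u)\,t^{iu}L^{iu}f(x)\,du,$$
and \eqref{eq:MelPlanch} applied pointwise in $x$ produces the key identity
$$S_\psi f(x)^2=\frac{1}{2\pi}\int_\mathbb{R}|\mathcal{M}(\psi)(u)|^2\,|L^{iu}f(x)|^2\,du.$$

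For the upper bound when $p\geq 2$, Minkowski's integral inequality in $L^{p/2}$ applied to the identity above, together with \eqref{eq:ImaPow}, gives
$$\|S_\psi f\|_p^2\lesssim\Big(\int_\mathbb{R}|\mathcal{M}(\psi)(u)|^2(1+|u|)^{2\alpha}\,du\Big)\|f\|_p^2,$$
and the final integral is finite by the decay of $\mathcal{M}(\psi)$. The range $p<2$ is the main obstacle, since Minkowski then has the wrong sign. My plan is to use the dyadic decomposition $\int_0^\infty|\psi(tL)f|^2\,dt/t=(\log 2)\int_0^1\sum_{k\in\mathbb Z}|\psi(2^{k+\tau}L)f|^2\,d\tau$, apply Minkowski's integral inequality $\|h\|_{L^p_x(L^2_\tau)}\leq\|h\|_{L^2_\tau(L^p_x)}$ (valid for $p\leq 2$), and reduce to a uniform-in-$\tau$ bound on $\|(\sum_k|\psi(2^{k+\tau}L)f|^2)^{1/2}\|_p$. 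This discrete square function can be handled by Khintchine's inequality combined with the Mihlin-H\"ormander calculus applied to the randomized multiplier $\lambda\mapsto\sum_k\epsilon_k\psi(2^{k+\tau}\lambda)$; verifying that its MH norm is controlled uniformly in $\epsilon$ and $\tau$ is the principal technical step, and is where the decay of $\psi$ (and of $s^j\psi^{(j)}$) provided by $N(\psi)$ is crucial.

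For the lower bound I would use a Calder\'on reproducing formula: setting $c=\int_0^\infty|\psi(s)|^2\,ds/s>0$ and $\tilde\psi=\psi/c$, the spectral theorem, the assumption $E(\{0\})=0$, and the change of variable $s=t\lambda$ give
$$f=\int_0^\infty\overline{\tilde\psi(tL)}\,\psi(tL)f\,\frac{dt}{t},\qquad f\in L^2.$$
Pairing with $g\in L^{p'}\cap L^2$ and applying Fubini, Cauchy-Schwarz in $t$, and H\"older in $x$, leads to $|\langle f,g\rangle|\leq\|S_\psi f\|_p\|S_{\tilde\psi}g\|_{p'}$. Since $\tilde{N}(\tilde\psi)=\tilde{N}(\psi)/c<\infty$, the upper bound already established for $S_{\tilde\psi}$ on $L^{p'}$ yields $\|f\|_p\lesssim\|S_\psi f\|_p$ on taking the supremum over $g\in L^{p'}\cap L^2$.

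The hard part is clearly the $p<2$ case of the upper bound; the Khintchine workaround succeeds only once the uniform MH control on the random dilation sum is secured, and this is also where the full strength of $N(\psi)$ enters. A secondary technical point is the rigorous validity of the pointwise Mellin-Plancherel identity and of the Calder\'on formula at the $L^p$ level (rather than merely on $L^2$); this should follow from the strong $L^p$-continuity of $u\mapsto L^{iu}f$ and a truncation argument analogous to the $\varphi_R$-approximation in the proof of Theorem \ref{thm:MHimpliesMaximal}. The $D(\psi,\alpha+2)$ term of $\tilde{N}(\psi)$ presumably enters here, by supplying the extra $(\log s)^2$-weighted decay of derivatives of $\psi$ that one needs to control the $u$-derivative of $\mathcal{M}(\psi)$ and ensure absolute convergence of the relevant $u$-integrals.
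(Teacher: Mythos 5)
Your starting point --- the pointwise Mellin--Plancherel identity $S_\psi f(x)^2=\frac{1}{2\pi}\int_{\mathbb{R}}|\M(\psi)(u)|^2|L^{iu}f(x)|^2\,du$ --- is exactly the paper's identity \eqref{eq:SqdefMel}, your $p\ge 2$ argument via Minkowski in $L^{p/2}$ is correct (and simpler than what the paper does in that range), and your Calder\'on/duality argument for the lower bound is precisely the polarization step the paper invokes. The problem is the case $1<p<2$, which you yourself flag as the crux: your workaround rests on the inequality $\|h\|_{L^p_x(L^2_\tau)}\le\|h\|_{L^2_\tau(L^p_x)}$, which you assert is ``valid for $p\le 2$''. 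It is valid precisely for $p\ge 2$: the generalized Minkowski inequality lets you move the \emph{smaller} exponent from the inside to the outside, so for $p<2$ the inequality reverses, and a uniform-in-$\tau$ bound on the discrete square function $\big(\sum_k|\psi(2^{k+\tau}L)f|^2\big)^{1/2}$ gives no control of $\|S_\psi f\|_p$. As written, the $p<2$ half of the upper bound --- and hence, via your own duality step, the lower bound for $p>2$ --- is not proved. A secondary issue: even granting the reduction, verifying the Mihlin--H\"ormander condition for $\la\mapsto\sum_k\epsilon_k\psi(2^{k+\tau}\la)$ uniformly requires summability over $k$ of $(2^k\la)^j\psi^{(j)}(2^k\la)$ for every $j\le\alpha$, which the integral conditions $C(\psi,\beta)<\infty$ for $\beta=0,1,\alpha+2$ alone do not obviously supply.

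The paper closes the gap with a different discretization, following Meda: it discretizes the Mellin dual variable $u$ rather than the time variable $t$. One takes a partition of unity $\sum_k h(u-\pi k)=1$ with $\supp h\subset[-\pi,\pi]$ and, on each block $Q_k$, expands $h_k\M(\psi)\la^{iu}$ in the Fourier basis $e^{-iju}$; Parseval turns the continuous $L^2_u$-norm into an exact pointwise identity $\int_{\mathbb{R}}|h_k(u)\M(\psi)(u)L^{iu}f(x)|^2\,du=c\sum_j|b_{j,k}(L)f(x)|^2$ a.e.\ in $x$, valid for every $p$ since no mixed-norm inequality is used. The resulting discrete square functions are then handled by Khintchine's inequality plus the MH calculus applied to $\sum_j a_j b_{j,k}$, with the decay of $\M(\psi)$ and of its first two $u$-derivatives (this is where $D(\psi,\alpha+2)$, i.e.\ the $(\log s)^2$ weight, enters) yielding the summability in $j$ and the $O((1+k^2)^{-1})$ decay in $k$. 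This discretization-in-$u$ device is the ingredient your proposal is missing for $1<p<2$.
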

Before proving the theorem we state a seemingly interesting corollary.
\begin{cor}
\label{thm:CKimpliesSqu}
Let $\psi\in C^{\alpha+2}[0,\infty)$ be such that $\psi(0)=0$ and assume that $\psi$ is supported on a compact set $K.$
Then, for each $1<p<\infty,$ we have
\begin{equation}
\label{eq:MHimpliesSqu}
\|f\|_{p}\lesssim_{p,\psi,K}\|S_{\psi}(f)\|_p\lesssim_{p,\psi,K}\|f\|_p,\qquad f\in L^2\cap L^p.
\end{equation}
\end{cor}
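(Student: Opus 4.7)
The plan is to deduce the corollary immediately from Theorem \ref{thm:MHimpliesSqu} by verifying that, for a compactly supported $C^{\alpha+2}$ function $\psi$, the quantity $\tilde N(\psi)$ is finite and controlled by $\|\psi\|_{C^{\alpha+2}([0,\infty))}$ with a constant depending only on the support set $K$. Since the hypothesis $\psi(0)=0$ is already given, this is the only missing ingredient.

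Fix $R<\infty$ with $K\subset[0,R]$. The first summand in $N(\psi)$ is trivially estimated by
\[
\|\psi\|_{C^{\alpha+2}([0,1])}\le \|\psi\|_{C^{\alpha+2}([0,\infty))}.
\]
For the integral pieces, note that $\frac{d^{\beta}}{ds^{\beta}}\psi(s)\equiv 0$ for $s>R$, so each of the integrals defining $C(\psi,\beta)$ and $D(\psi,\alpha+2)$ is supported on $[1,R]$ (and vanishes if $R<1$). Hence, for $\beta\in\{0,1,\alpha+2\}$,
\[
C(\psi,\beta)\le \|\psi\|_{C^{\alpha+2}([0,\infty))}\int_{1}^{\max(R,1)}s^{\beta-1}\,ds\lesssim_{K}\|\psi\|_{C^{\alpha+2}([0,\infty))},
\]
and analogously
\[
D(\psi,\alpha+2)\le \|\psi\|_{C^{\alpha+2}([0,\infty))}\int_{1}^{\max(R,1)}(\log s)^2 s^{\alpha+1}\,ds\lesssim_{K}\|\psi\|_{C^{\alpha+2}([0,\infty))}.
\]
Summing these estimates gives $\tilde N(\psi)\lesssim_{K}\|\psi\|_{C^{\alpha+2}([0,\infty))}<\infty$.

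Plugging this bound into the conclusion of Theorem \ref{thm:MHimpliesSqu} yields \eqref{eq:MHimpliesSqu}, with the implicit constant absorbing both the $p$-dependent constant from that theorem and the $K$-dependent factor from $\tilde N(\psi)$. There is no real obstacle here: all of the nontrivial analysis — the Mellin-transform representation, the reduction to imaginary powers, and the lower square-function bound via duality — has already been carried out in the proof of Theorem \ref{thm:MHimpliesSqu}; the compactly supported case is simply a verification that its integral hypotheses are met.
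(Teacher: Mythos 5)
Your proposal is correct and follows exactly the paper's route: the paper's proof is the one-line observation that $\tilde N(\psi)<\infty$ for compactly supported $\psi$, after which Theorem \ref{thm:MHimpliesSqu} applies. You have simply written out the elementary support-restriction estimates that the paper leaves implicit.
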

\begin{proof}
 Simply observe that $\tilde{N}(\psi)<\infty.$
\end{proof}
\begin{remark}
Examples of functions admitted by the corollary are $\psi^{\delta}(\la)=\la (1-\la)^{\delta}\chi_{\{\la<1\}},$ with $\delta\geq \alpha+2.$ The usefulness of $S_{\psi^{\delta}}$ lies in the fact that sharp $L^p,$ $p>2,$ bounds for this square function imply sharp $L^p$ bounds for maximal functions $M_{\varphi^{\delta}}$ (defined on p.\ \pageref{p:Mphidelta}) associated with Bochner-Riesz means, see \cite[p.\ 274]{LeeRogSeeg}, \cite{Car1}, \cite{Car2}, and \cite{DapTre1}. This is true not only in the Fourier case but also in general as the reasoning from \cite[p.\ 54]{Car2} can be repeated mutatis mutandis.
\end{remark}
We proceed with the proof of the main result of this section.
\begin{proof}[Proof of Theorem \ref{thm:MHimpliesSqu}]
Since $\tilde{N}(\psi)<\infty$  implies the boundedness of $\psi,$ we see that \eqref{eq:SqL2} holds. Therefore the square function $S_{\psi}$ is bounded on $L^2.$

We use Melin transform techniques here together with ideas from \cite{CowDouMcYa}. By a polarization argument it is enough to prove the right hand side inequality in \eqref{eq:MHimpliesSqu}. Note that, since $\psi(0)=0,$ the function $\M(\psi)(u)$ is well defined. Moreover, for fixed $\la>0$ it holds $\M(\psi(\cdot \la))(u)=\M(\psi)(u)\la^{iu}.$ Thus using the spectral theorem and Plancherel's formula \eqref{eq:MelPlanch} the square function given by \eqref{eq:Sqdef} can be reexpressed as
\begin{equation}
\label{eq:SqdefMel}
S_{\psi}(f)=\bigg(\int_{\mathbb{R}}|\M(\psi)(u)L^{iu}f|^2\,du\bigg)^{1/2}.
\end{equation}

We start by following the scheme from \cite{Me_gfun}. Take a smooth function $h$ on $\mathbb{R},$ supported in $[-\pi,\pi],$ and such that
\begin{equation} \label{eq:resid}\sum_{k\in\mathbb{Z}} h_k(u):=\sum_{k\in\mathbb{Z}} h(u-\pi k)=1,\qquad u\in\mathbb{R}.\end{equation} For each $j\in\mathbb{Z}$ we define the functions $b_{j,k}$ on $[0,\infty)$
by
\begin{equation*}
b_{j,k}(\la)=\int_{\mathbb{R}}h_{k}(u)\M(\psi)(u)e^{-ij u} \la^{iu}\,du.
\end{equation*}
It is not hard to see that the functions $b_{j,k}$ are bounded on $[0,\infty),$ thus it makes sense to consider $b_{j,k}(L).$

From Parseval's formula for the Fourier series it follows that, for each $k\in\mathbb{Z},$ the set $\{(2\pi)^{-1/2}e^{-ij( u-\pi k) }\}_{j\in\mathbb{Z}}$ forms an orthonormal basis in the space $L^2(Q_k,du),$ with $Q_k=[k\pi-\pi,k\pi+\pi].$ Therefore, since $\supp(h_k)\subset Q_k$ and $|e^{i \pi \langle j, k\rangle}|=1,$ for each fixed $\la >0$ we have
\begin{equation}
\label{eq:Pars}
\begin{split}
&\int_{\mathbb{R}}\left|h_k(u)\M(\psi)(u)\la^{iu}\right|^2\,du\\
&=\frac{1}{(2\pi)^{1/2}}\sum_{j\in\mathbb{Z}} \left|\int_{\mathbb{R}}h_k(u)\M(\psi)(u)e^{-iju}\la^{iu}\,du\right|^2.
\end{split}
\end{equation}

Now, using \eqref{eq:SqdefMel}, \eqref{eq:resid} and Minkowski's integral inequality, followed by the spectral theorem and \eqref{eq:Pars}, we obtain
\begin{align*}
S_{\psi}(f)(x)&\leq (2\pi)^{-1/2} \sum_{k\in\mathbb{Z}^d}\left[\int_{\mathbb{R}}\left|h_k(u)\M(\psi)(u)L^{iu}f(x)\right|^2\,du\right]^{1/2}\\
&=(2\pi)^{-1}\sum_{k\in\mathbb{Z}}\left[\sum_{j\in\mathbb{Z}}\left|\int_{\mathbb{R}}h_k(u)\M(\psi)(u)e^{-ij u}L^{iu}f(x)\,du\right|^2\right]^{1/2}\\
&=(2\pi)^{-1}\sum_{k\in\mathbb{Z}}\bigg(\sum_{j\in\mathbb{Z}}|b_{j,k}(L)(f)(x)|^2\bigg)^{1/2},\qquad x-\textrm{a.e.},
\end{align*}
and consequently,
\begin{equation*}
\|S_{\psi}(f)\|_p\leq (2\pi)^{-1}\sum_{k\in\mathbb{Z}}\bigg\|\bigg(\sum_{j\in\mathbb{Z}}|b_{j,k}(L)f|^2\bigg)^{1/2}\bigg\|_p.
\end{equation*}
Then from Khintchine's inequality it follows that
\begin{equation}
\label{eq:ineqaim}
(2\pi)\|S_{\psi}(f)\|_p\leq \sum_{k\in\mathbb{Z}}\sup_{|a_{j}|\leq 1}\bigg\|\bigg(\sum_{j\in\mathbb{Z}}a_{j} b_{j,k}(L)\bigg)f\bigg\|_p
:=\sum_{k\in\mathbb{Z}}\sup_{|a_{j}|\leq 1}\bigg\|m^{a_j}_{k}(L)f\bigg\|_p.
\end{equation}

We shall now focus on estimating the $L^p$ operator norm of each of the operators $m^{a}_{k}(L),$ defined in \eqref{eq:ineqaim}.
We claim that each of the functions $$m(\la):=m^{a}_{k}(\la)=\sum_{j\in\mathbb{Z}}a_{j} b_{j,k}(\la)$$ satisfies the Mihlin-H\"ormander condition \eqref{eq:MihHorm}, uniformly in $a,$ and with a quadratic decay in $k,$ i.e.\
\begin{equation}
\label{eq:claim}
\|m\|_{MH(\alpha)}\lesssim \frac{1}{1+k^2},\qquad k\in \mathbb{Z}.
\end{equation}

If the claim is true, then, using the assumption that the operator $L$ has a Mihlin-H\"ormander functional calculus of order $\alpha,$ and coming back to \eqref{eq:ineqaim} we finish the proof of Theorem \ref{thm:MHimpliesSqu}.

Thus we focus on proving \eqref{eq:claim}. Let $n(y)=m(e^y),$ $y\in \mathbb{R}.$ Then, using e.g.\ Fa\'a di Bruno's formula, it is not hard to see that
$$\|m\|_{MH(\alpha)}\approx \|n\|_{C^{\alpha}(\mathbb{R})}.$$ Consequently, denoting
$$C_{j,k}(y):=b_{j,k}(e^y)=\int_{\mathbb{R}}h_k(u)\M(\psi)(u)e^{i(y-j) u}\,du,\qquad y\in\mathbb{R},$$
it is enough to show that
\begin{equation}
\label{eq:expreduct}
\|n\|_{C^{\alpha}(\mathbb{R})}=\bigg\|\sum_j a_j C_{j,k}\bigg\|_{C^{\alpha}(\mathbb{R})}\lesssim \frac1{1+k^2}.
\end{equation}

In order to prove \eqref{eq:expreduct} we will need the bound
\begin{equation}
\label{eq:SquMest}
\big|\M(\psi)(u)\big|+\bigg|\frac{d}{du}\M(\psi)(u)\bigg|+\bigg|\frac{d^2}{du^2}\M(\psi)(u)\bigg|\lesssim \tilde{N}(\psi)(1+|u|)^{-\alpha-2}\lesssim (1+|u|)^{-\alpha-2}.
\end{equation}
It suffices to show \eqref{eq:SquMest} for compactly supported $\psi$ as the general case follows from an approximation argument similar to the one performed in the proof of Theorem \ref{thm:MHimpliesMaximal}.

For small $|u|<1$ the finiteness of $\tilde{N}(\psi)$ implies
\begin{equation}
\label{eq:SquMestu<1}
\big|\M(\psi)(u)\big|+\bigg|\frac{d}{du}\M(\psi)(u)\bigg|+\bigg|\frac{d^2}{du^2}\M(\psi)(u)\bigg|\lesssim \tilde{N}(\psi).
\end{equation}
For large $|u|>1$ integration by parts (which is legitimate since $\psi$ is compactly supported) produces
\begin{equation*}
\M(\psi)(u)=\frac{1}{-iu\cdots(-iu+\alpha+2)}\int_{0}^{\infty}s^{-iu+\alpha+1}\frac{d^{\alpha+2}}{ds^{\alpha+2}}\psi(s)\,ds,
\end{equation*}
and, consequently,
\begin{align*}
\frac{d}{du}\M(\psi)(u)&=\frac{1}{-iu\cdots(-iu+\alpha+2)}\int_{0}^{\infty}(-i\log s)s^{-iu+\alpha+1}\frac{d^{\alpha+2}}{ds^{\alpha+2}}\psi(s)\,ds,\\
\frac{d^2}{du^2}\M(\psi)(u)&=\frac{1}{-iu\cdots(-iu+\alpha+2)}\int_{0}^{\infty}(-i\log s)^{2}s^{-iu+\alpha+1}\frac{d^{\alpha+2}}{ds^{\alpha+2}}\psi(s)\,ds.
\end{align*}
Thus, using the assumptions on $\psi$ we obtain \eqref{eq:SquMest} also for $|u|>1.$

Coming back to the proof of \eqref{eq:expreduct} we observe that for each fixed $\beta\leq \alpha$ it holds
\begin{equation}\label{eq:dxCjkform}\frac{d^{\beta}}{dy^{\beta}}C_{j,k}(y)=i^{\beta}\int_{\mathbb{R}}h_k(u)\M(\psi)(u)u^\beta e^{i(y-j) u}\,du.\end{equation}
Hence, \eqref{eq:SquMest} together with $\supp h_k\subseteq [k\pi-\pi,k\pi+\pi]$ give
\begin{equation}
\label{eq:SquCjkest1}
\bigg|\frac{d^{\beta}}{dy^{\beta}}C_{j,k}(y)\bigg|\lesssim \int_{[k\pi-\pi,k\pi+\pi]}\frac{|u|^{\alpha}}{(1+|u|)^{\alpha+2}}\,du\lesssim \frac1{1+k^2}.
\end{equation}
Using additionally integration by parts in \eqref{eq:dxCjkform} and applying \eqref{eq:SquMest} we obtain
\begin{equation}
\label{eq:SquCjkest2}
\bigg|\frac{d^{\beta}}{dy^{\beta}}C_{j,k}(y)\bigg|=\frac{1}{(y-j)^2}\bigg|\int_{\mathbb{R}}\frac{d^2}{du^2}[h_k(u)\M(\psi)(u)u^\beta] e^{i(y-j) u}\,du\bigg|\lesssim \frac{1}{1+k^2}\frac{1}{(y-j)^2}.
\end{equation}
Finally, combining \eqref{eq:SquCjkest1} and \eqref{eq:SquCjkest2} we arrive at the bound
$$\bigg|\frac{d^{\beta}}{dy^{\beta}}n(y)\bigg|\lesssim \frac1{1+k^2}\bigg(1+\sum_{j\colon|j-y|> 1}\frac{1}{(j-y)^2}\bigg)\lesssim \frac1{1+k^2},\qquad y\in\mathbb{R},$$ valid for $\beta\leq \alpha.$ The proof of \eqref{eq:expreduct} and thus also of the theorem is completed.

\end{proof}

\subsection*{Acknowledgments}
The research was supported by Polish funds for sciences, National Science Centre (NCN), Poland, Research Project 2014\slash 15\slash D\slash ST1\slash 00405.

\end{document}